\theoremstyle{plain}
\newtheorem{thm}{Theorem}[section]
\newtheorem{lem}[thm]{Lemma}
\newtheorem{cor}[thm]{Corollary}
\theoremstyle{definition}
\newtheorem*{acknowledgments}{Acknowledgements}
\theoremstyle{remark}
\newtheorem{rem}[thm]{Remark}
\newtheorem*{rem*}{Remark}
\newtheorem*{exa*}{Example}
\newcommand{\bB}{\mathbb B}
\newcommand{\bC}{\mathbb C}
\newcommand{\bD}{\mathbb D}
\newcommand{\bR}{\mathbb R}
\newcommand{\cB}{\mathcal B}
\newcommand{\cF}{\mathcal F}
\newcommand{\cH}{\mathcal H}
\DeclareMathOperator{\Mult}{Mult}
\newcommand{\tfa}{\text{ for all }}
\newcommand{\kerpart}[1]{\langle \cdot,#1 \rangle}
\newcommand{\res}[1]{\big|_{#1}}
\author{Michael Hartz}
\title[Nevanlinna-Pick spaces and hyponormality]{Nevanlinna-Pick spaces with hyponormal multiplication operators}
\address{Department of Pure Mathematics, University of Waterloo, Waterloo, ON N2L 3G1, Canada}
\email{mphartz@uwaterloo.ca}
\thanks{The author is partially supported by an Ontario Trillium Scholarship.}
\subjclass[2010]{Primary 46E22; Secondary 47B32, 47B20}
\keywords{Reproducing kernel Hilbert spaces, Nevanlinna-Pick kernels}
\begin{document}

\begin{abstract}
  We show that the Hardy space on the unit disk is the only non-trivial irreducible reproducing kernel Hilbert
  space which satisfies the complete Nevanlinna-Pick property and hyponormality of all multiplication operators.
\end{abstract}

\maketitle

\section{Introduction}
\label{S:intro}

Let $\cH$ be a reproducing kernel Hilbert space on a set $X$ with kernel $K$.
In this short note, we study the relationship between two possible properties of $\cH$:
the complete Nevanlinna-Pick property and hyponormality of multiplication operators.
Recall that $\cH$ is said to be a Nevanlinna-Pick space if, given
$z_1,\ldots,z_n \in X$ and $w_1,\ldots,w_n \in \bC$, positivity of the matrix
\begin{equation*}
  \Big( (1-w_i \overline{w_j}) K(z_i,z_j) \Big)_{i,j=1}^n
\end{equation*}
is not only a necessary, but also a sufficient condition for the existence of a multiplier
$\varphi$ on $\cH$ of norm at most $1$ with
\begin{equation*}
  \varphi(z_i) = w_i \quad (i=1,\ldots,n). 
\end{equation*}
If the analogous result
for matrix-valued interpolation holds, then $\cH$ is called a complete Nevanlinna-Pick space (compare Chapter 5 in \cite{AM02}).
Spaces with this property have attracted a lot of attention, and it is known that they admit
appropriate versions of some classical theorems for the Hardy space $H^2$ on the disk,
such as the commutant lifting theorem \cite{BTV01} (see also \cite{AT02}), the Toeplitz-corona theorem \cite[Section 8.4]{AM02} and Beurling's theorem \cite[Section 8.5]{AM02}.

The second property we consider is hyponormality of multiplication operators, that is, the property that for every
multiplier $\varphi$ on $\cH$, the corresponding multiplication operator $M_\varphi \in \mathcal B(\cH)$
is hyponormal. While multiplication operators are not normal in typical examples,
they are subnormal and hence hyponormal for
a number of reproducing kernel Hilbert spaces, including Hardy and Bergman spaces on domains
in $\bC^d$.

Two results concerning weighted Hardy spaces serve as a motivation for the study of the relationship between the two properties.
Suppose for a moment that $\cH$ is a reproducing kernel Hilbert space on the open unit disk $\mathbb D$ with kernel $K$ of the form
\begin{equation*}
  K(z,w) = \sum_{n=0}^\infty a_n (z \overline {w})^n \quad (z,w \in \bD),
\end{equation*}
where $(a_n)$ is a sequence of positive numbers with $a_0 = 1$.
Note that the classical Hardy space $H^2$ corresponds
to the choice $a_n = 1$ for all $n$, in which case we recover the Szeg\H{o} kernel $(1-z \overline{w})^{-1}$.
We assume that multiplication by the coordinate
function $z$ induces a bounded multiplication operator $M_z$ on $\cH$.
Equivalently, the sequence $(a_n/a_{n+1})$ is bounded.
Then the operator $M_z$ is hyponormal if and only if
\begin{equation*}
  \frac{a_n}{a_{n-1}} \ge \frac{a_{n+1}}{a_n} \quad \text{ for all } n \ge 1
\end{equation*}
(see Section 7 in \cite{Shields74}, and note that the sequence $(\beta(n))$ there is related to $(a_n)$ via 
$a_n = \beta(n)^{-2}$).
On the other hand, a sufficient condition for $\cH$ being a complete Nevanlinna-Pick space is that the reverse inequalities
\begin{equation*}
  \frac{a_n}{a_{n-1}} \le \frac{a_{n+1}}{a_n} \quad \text{ for all } n \ge 1
\end{equation*}
hold (see Lemma 7.38 and Theorem 7.33 in \cite{AM02}).
Since this condition is not necessary, the two results do not immediately tell us anything new about
weighted Hardy spaces satisfying both the Nevanlinna-Pick property and hyponormality
of multiplication operators.
Nevertheless, they seem to indicate that the presence of both properties is special.

The aim of this note is to show that the Hardy space is essentially the only complete Nevanlinna-Pick space whose
multiplication operators are hyponormal. Recall that a reproducing kernel Hilbert space $\cH$ with kernel $K$ on a set $X$ is called
\emph{irreducible} if $K(x,y)$ is never zero
for $x,y \in X$ and if $K(\cdot,x)$ and $K(\cdot,y)$ are linearly independent for different $x,y \in X$.
We call a set $A \subset \mathbb D$ a \emph{set of uniqueness} for $H^2$ if the only element
of $H^2$ which vanishes on $A$ is the zero function.
The main result now reads as follows.

\begin{thm}
  \label{T:main}
  Let $\cH$ be an irreducible complete Nevanlinna-Pick space on a set $X$ with kernel $K$
  such that all
  multiplication operators on $\cH$ are hyponormal. Then one of the following possibilities holds:
  \begin{enumerate}[label=\normalfont{(\arabic*)}]
    \item $X$ is a singleton and $\cH = \mathbb C$.
    \item There is a set of uniqueness $A \subset \mathbb D$ for $H^2$,
      a bijection $j: X \to A$ and a nowhere vanishing function $\delta:X \to \mathbb C$
      such that
      \begin{equation*}
        K(\lambda,\mu) = \delta(\lambda) \overline{\delta(\mu)} \, k(j(\lambda),j(\mu)),
      \end{equation*}
      where $k(z,w) = (1 - z \overline{w})^{-1}$ denotes the Szeg\H{o} kernel.
      Hence,
      \begin{equation*}
        H^2 \to \cH, \quad f \mapsto \delta (f \circ j),
      \end{equation*}
      is a unitary operator. If $X$ is endowed with a topology such that $K$ is separately continuous on $X \times X$,
      then $j$ is continuous. If $X \subset \bC^n$ and $K$ is holomorphic in the first variable,
      then $j$ is holomorphic.
  \end{enumerate}
\end{thm}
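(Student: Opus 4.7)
The starting point is the characterization of irreducible complete Nevanlinna-Pick kernels due to Agler and McCarthy: there exist a cardinal $d \in \{1,2,\ldots,\infty\}$, a nowhere-vanishing $\delta : X \to \bC$, and an injective map $b : X \to \bB_d$ such that
\[
  K(x,y) = \frac{\delta(x)\overline{\delta(y)}}{1 - \langle b(x), b(y)\rangle}.
\]
Replacing $\cH$ by the unitarily equivalent space with kernel $K/(\delta\overline{\delta})$, which preserves all multiplication operators up to unitary equivalence, I may assume $\delta \equiv 1$ throughout, restoring $\delta$ only at the very end. With this normalization $\cH$ is canonically identified with the coinvariant subspace $\mathcal{J}^\perp \subseteq H^2_d$ of Drury-Arveson space on $\bB_d$, where $\mathcal{J} = \{f \in H^2_d : f|_{b(X)} = 0\}$, and multiplication operators on $\cH$ are precisely the compressions to $\mathcal{J}^\perp$ of multiplication operators on $H^2_d$. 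I may also assume $|X| \geq 2$, else we are already in case~(1).

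The first main step is to rule out $d \geq 2$, and this is the principal obstacle of the proof. The plan is to exhibit, whenever $d \geq 2$, a multiplier on $\cH$ whose multiplication operator fails hyponormality. The guiding observation is that on $H^2_d$ itself, with $d \geq 2$, the multiplier $z_1 z_2$ is not hyponormal: a monomial basis computation gives
\[
  \|M_{z_1 z_2}^* e_{(1,1)}\|^2 = \tfrac{1}{2} > \tfrac{1}{3} = \|M_{z_1 z_2} e_{(1,1)}\|^2,
\]
where $e_\alpha = \sqrt{|\alpha|!/\alpha!}\,z^\alpha$ are the normalized Drury-Arveson monomials. To transfer this failure to $\cH$ I would pick two points $x_1, x_2 \in X$ whose images span a two-dimensional subspace of $\bC^d$ (possible after reducing $d$ to the dimension of the linear span of $b(X)$), use a M\"obius automorphism of $\bB_d$ to normalize $b(x_1) = 0$, and then produce a test vector in $\cH$ on which hyponormality of $M_{b_1 b_2}$ fails. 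The delicate point here is that compressions of non-hyponormal operators to coinvariant subspaces are not automatically non-hyponormal, so the complete Nevanlinna-Pick property combined with the explicit form of $\cH$ has to be invoked to conclude.

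Once $d = 1$ is established, $b : X \to \bD$ is injective; set $A = b(X)$ and let $j : X \to A$ be the induced bijection. Then $\cH$ is unitarily equivalent to the coinvariant subspace $\mathcal{I}_A^\perp \subseteq H^2$, where $\mathcal{I}_A = \{f \in H^2 : f|_A = 0\}$. I claim $\mathcal{I}_A = 0$. If not, Beurling's theorem gives $\mathcal{I}_A = \Theta H^2$ for some non-constant inner $\Theta$, and $M_b$ on $\cH$ corresponds to the compressed shift $S_\Theta = P_{\mathcal{K}_\Theta} M_z|_{\mathcal{K}_\Theta}$ on $\mathcal{K}_\Theta = (\Theta H^2)^\perp$. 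Using $M_z^* M_z = I$ and $M_z M_z^* = I - P_\bC$ on $H^2$ one computes directly that, for $f \in \mathcal{K}_\Theta$,
\[
  \|S_\Theta f\|^2 - \|S_\Theta^* f\|^2 = |f(0)|^2 - \|P_{\Theta H^2}(zf)\|^2.
\]
Choosing $f \in \mathcal{K}_\Theta$ with $f(0) = 0$ and $zf \notin \mathcal{K}_\Theta$---such $f$ exists because $\dim \mathcal{K}_\Theta \geq 2$ whenever $|A| \geq 2$---makes the right-hand side strictly negative, contradicting hyponormality of $M_b$. Hence $A$ is a set of uniqueness for $H^2$.

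With $A$ a uniqueness set and $\delta$ reinstated, the map $H^2 \to \cH$, $f \mapsto \delta\cdot (f\circ j)$, is injective (by uniqueness), surjective (since $\cH \cong H^2/\mathcal{I}_A = H^2$), and norm-preserving, hence unitary. The continuity and holomorphy assertions for $j = b$ follow from the identity $b(x)\overline{b(y)} = 1 - \delta(x)\overline{\delta(y)}/K(x,y)$, which recovers $b$ from $K$ and $\delta$ once a single nonzero value of $b$ has been fixed.
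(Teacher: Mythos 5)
Your proposal has a genuine gap at exactly the step you flag as ``the principal obstacle'': ruling out $d\ge 2$. What you write there is a plan, not a proof --- you propose to ``produce a test vector in $\cH$ on which hyponormality of $M_{b_1b_2}$ fails'' and then concede that ``the complete Nevanlinna-Pick property combined with the explicit form of $\cH$ has to be invoked to conclude,'' without saying how. The difficulty is real and is not a technicality: the space $\cH$ is identified with the coinvariant subspace $\mathcal J^\perp\subset H^2_d$, which a priori can be very small (if $X$ is finite it is spanned by finitely many kernel functions and contains no monomial whatsoever), so the vector $e_{(1,1)}$ on which $M_{z_1z_2}$ fails hyponormality in $H^2_d$ is simply not available, and compressions of non-hyponormal operators to coinvariant subspaces can of course be hyponormal. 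The paper's entire content lies in bridging this gap, and it does so \emph{before} invoking Arveson's computation: using only hyponormality of the compressions of the linear multipliers $M_{\langle\cdot,w\rangle}$, one finds specific vectors $f\in\mathcal J^\perp$ (namely $f=k_d(\cdot,z)-1$ and the powers $\langle\cdot,z\rangle^{n}$) for which $\|M_{\langle\cdot,z\rangle}f\|=\|M_{\langle\cdot,z\rangle}^*f\|$; hyponormality of the compression then forces $M_{\langle\cdot,z\rangle}f$ to lie back in $\mathcal J^\perp$ (Lemma \ref{L:hyponormal_isometric}). Iterating, $\mathcal J^\perp$ is shown to contain every kernel function $k_d(\cdot,v)$ for $v$ in the ball of the closed span of $b(X)$, i.e.\ $\cH\cong H^2_{d'}$ with $d'=\dim\operatorname{span}b(X)$ (Lemmas \ref{L:hyponormal_DA_kernel} and \ref{L:F_ball}). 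Only at that point does the full space contain $z_1z_2$ and $e_{(1,1)}$, and Arveson's computation forces $d'=1$. Your proposal contains no substitute for this mechanism, and since your whole architecture (rule out $d\ge2$ first, then treat $d=1$) rests on it, the proof as written does not go through.

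Two secondary remarks. First, your treatment of the $d=1$ case is a correct and genuinely different route to the uniqueness-set conclusion: the paper obtains $I(Y)=0$ as a byproduct of the same ``ball'' lemma (a two-point set already forces $\overline Y=\bD$), whereas you use Beurling's theorem and the identity $\|S_\Theta f\|^2-\|S_\Theta^*f\|^2=|f(0)|^2-\|P_{\Theta H^2}(zf)\|^2$ for the compressed shift; this is a nice, concrete argument, though the existence of $f\in\mathcal K_\Theta$ with $f(0)=0$ and $zf\notin\mathcal K_\Theta$ needs slightly more than $\dim\mathcal K_\Theta\ge2$ (if every such $f$ had $zf\in\mathcal K_\Theta$, then $\mathcal K_\Theta$ would contain a nonzero shift-invariant subspace $\Psi H^2$, which cannot be orthogonal to $\Theta H^2$). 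Second, your normalization and your derivation of continuity and holomorphy of $j$ from the rearranged kernel identity match the paper and are fine.
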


Since the Hardy space $H^2$ is a complete Nevanlinna-Pick space whose multiplication operators
are hyponormal, it is easy to see
that the same is true for every space as in part (2).
Hence, this result characterizes Hilbert function spaces with these two properties.

\begin{rem}
  (a)
  It is well known that sets of uniqueness for $H^2$ are characterized by the Blaschke condition (see, for example,
  \cite[Section II 2]{Garnett07}):
  A set $A \subset \bD$ is a set of uniqueness for $H^2$ if and only if
  \begin{equation*}
    \sum_{a \in A} (1 - |a|) = \infty.
  \end{equation*}
  (b)
    The condition that $K(x,y)$ is never zero is not very restrictive. Indeed, if we drop this condition,
    then $X$ can be partitioned into sets $(X_i)$ such that the restriction of $\cH$ to each $X_i$ (compare the next section)
    is an irreducible complete Nevanlinna-Pick space (see \cite[Lemma 7.2]{AM02}).
    This yields a decomposition of $\cH$ into an orthogonal direct
    sum of irreducible complete Nevanlinna-Pick spaces $\cH_i$.
    It is not hard to see that this decomposition is reducing for multiplication operators.
    Hence, all multiplication operators on $\cH$ are hyponormal if and only if
    this is true for each summand.
    We omit the details.
\end{rem}

Before we come to the proof of the main result, let us consider an application to Hilbert function spaces
in higher dimensions. In particular, this applies to holomorphic Hilbert function spaces on the open unit ball
in $\bC^n$ for $n \ge 2$.
Standard examples of such spaces either have the property that all multiplication
operators are hyponormal (such as Hardy and Bergman space)
or have the Nevanlinna-Pick property (such as the Drury-Arveson space, see the next section), but not both.
This is not a coincidence.

\begin{cor}
  Let $n \ge 3$ be a natural number, and let $U \subset \bR^n$ be an open set. Then there is no irreducible
  complete Nevanlinna-Pick space on $U$ which consists of continuous functions and whose
  multiplication operators are all hyponormal.
\end{cor}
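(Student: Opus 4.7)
My plan is to apply Theorem \ref{T:main} to any such hypothetical $\cH$ and derive a topological contradiction from invariance of domain.

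First I would rule out case (1) of Theorem \ref{T:main}. Since $U$ is a nonempty open subset of $\bR^n$ with $n \ge 3$, it contains more than one point, and irreducibility (linear independence of distinct kernel sections) forces $\dim \cH \ge 2$, so $\cH \ne \bC$. Therefore case (2) must apply, provided its topological hypothesis on $K$ holds. To check this, I would verify that $K$ is separately continuous on $U \times U$: for fixed $y \in U$ the function $K(\cdot, y)$ belongs to $\cH$, hence is continuous by assumption; for fixed $x$, $K(x, \cdot) = \overline{K(\cdot, x)}$ is continuous as the complex conjugate of a continuous function. Theorem \ref{T:main} then produces a continuous bijection $j: U \to A$ onto some $A \subset \bD$, and hence a continuous injection $j: U \to \bR^2$.

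To finish, I would compose $j$ with the inclusion $\bR^2 \hookrightarrow \bR^n$ (available since $n \ge 3$) to obtain a continuous injection $\tilde j: U \to \bR^n$. By Brouwer's invariance of domain, $\tilde j(U)$ must be open in $\bR^n$; but $\tilde j(U)$ is contained in a two-dimensional affine subspace of $\bR^n$, which has empty interior when $n \ge 3$. This contradiction completes the argument.

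I do not anticipate a serious obstacle, since all of the substantive analytic content is packaged into Theorem \ref{T:main}; the only point requiring care is the verification of the separate-continuity hypothesis, which is immediate from the assumption that $\cH$ consists of continuous functions. One could equivalently avoid invariance of domain and instead restrict $j$ to a small closed ball $\bar B \subset U$, noting that this gives a homeomorphism onto its image (compact into Hausdorff) while the topological dimension $n \ge 3$ of $\bar B$ cannot be realized by any subset of $\bR^2$; either formulation works.
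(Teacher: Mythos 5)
Your proposal is correct and follows essentially the same route as the paper: verify separate continuity of $K$ from continuity of the functions in $\cH$, invoke Theorem \ref{T:main} to obtain a continuous injection $j:U\to\bD$, and derive a contradiction from Brouwer's invariance of domain. The only difference is that you spell out the details the paper leaves implicit (ruling out case (1) and composing with the inclusion $\bR^2\hookrightarrow\bR^n$), which is fine.
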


\begin{proof}
  Assume toward a contradiction that $\cH$ is such a Hilbert functions space, and let $K$ be its kernel. Since
  the functions in $\cH$ are continuous, it follows that $K$ is separately continuous. Hence, Theorem \ref{T:main}
  implies that there is a continuous injection $j: U \to \mathbb D$. But this is impossible if $n \ge 3$ 
  due to Brouwer's domain invariance theorem \cite{Brouwer11}.
\end{proof}

\section{Embedding into Drury-Arveson space}
\label{S:prelim}

As a first step in the proof of the main result, we will embed the complete Nevanlinna-Pick
space $\cH$ into the Drury-Arveson space.
Given a cardinal $d$, we write $\bB_d$ for the open unit ball in $\ell^2(d)$. The Drury-Arveson
space $H^2_d$ is the reproducing kernel Hilbert space on $\bB_d$ with kernel
\begin{equation*}
  k_d(z,w) = \frac{1}{1-\langle z,w \rangle}.
\end{equation*}
If $d=1$, this is the Hardy space $H^2$. For $d \ge 2$,
Arveson \cite{Arveson98} exhibited multipliers on $H^2_d$ which are not hyponormal by
showing that their spectral radius is strictly less than their multiplier norm. Indeed, if $z_1$ and $z_2$ denote
the coordinate functions on $\bC^2$, then $M_{z_1 z_2}$ is not
hyponormal on $H^2_2$, as
\begin{equation*}
  ||M_{z_1 z_2} z_1 z_2 ||^2 = \frac{1}{6} < \frac{1}{4} = || M_{z_1 z_2}^* z_1 z_2 ||^2
\end{equation*}
(see \cite[Lemma 3.8]{Arveson98}). This observation readily generalizes to $d \ge 2$.

Given a subset $Y \subset \bB_d$, we write
$H^2_d \res{Y}$ for the reproducing kernel Hilbert space on $Y$ with kernel
$k_d \res{Y \times Y}$. A well-known result about Hilbert function spaces asserts that
\begin{equation*}
  H^2_d \res{Y} = \{f \res{Y}: f \in H^2_d\},
\end{equation*}
and that the restriction map $f \mapsto f \res{Y}$ is a coisometry. Hence, if
\begin{equation*}
  I(Y) = \{f \in H^2_d: f \res{Y} = 0 \}
\end{equation*}
denotes the kernel of the restriction map, then
\begin{equation}
  \label{E:res_map}
   H^2_d \ominus I(Y) \to H^2_d \res{Y}, \quad f \mapsto f \res{Y},
\end{equation}
is a unitary. The following theorem due to
Agler and McCarthy provides the desired embedding of $\cH$ into the Drury-Arveson space.

\begin{thm}
  \label{T:embed_into_DA}
  Let $\cH$ be an irreducible complete Nevanlinna-Pick space on a set $X$ with kernel $K$.
  Assume that $K$ is normalized at $\lambda_0 \in X$ in
  the sense that $K(\lambda_0,\mu) = 1$ for all $\mu \in X$. Then
  there is a cardinal $d$ and an injection $b: X \to \bB_d$ with $b(\lambda_0)=0$ such that
  \begin{equation*}
    K(\lambda,\mu) = \frac{1}{1-\langle b(\lambda), b(\mu) \rangle} \quad (\lambda,\mu \in X).
  \end{equation*}
  Hence,
  \begin{equation*}
    H^2_d \ominus I(Y) \to \cH, \quad f \mapsto (f \res{Y}) \circ b,
  \end{equation*}
  is a unitary operator, where $Y = b(X)$.
\end{thm}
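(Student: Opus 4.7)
The plan is to invoke the McCullough--Quiggin characterization of complete Nevanlinna--Pick kernels (see \cite{AM02}), which asserts that a normalized kernel $K$ with $K(\lambda_0,\mu)=1$ for all $\mu$ is a complete NP kernel if and only if the function
\begin{equation*}
  F(\lambda,\mu) := 1 - \frac{1}{K(\lambda,\mu)}
\end{equation*}
is a positive semidefinite kernel on $X$. Note that $F$ is well defined since irreducibility forces $K$ to be nowhere zero. Granting this characterization, the theorem follows by combining an Aronszajn--Kolmogorov style factorization of $F$ with short arguments handling normalization and injectivity of $b$.

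More precisely, once $F$ is a positive kernel, the Moore--Aronszajn construction produces a Hilbert space $\cH'$ and a map $b: X \to \cH'$ whose range has dense linear span, such that
\begin{equation*}
  F(\lambda,\mu) = \langle b(\lambda), b(\mu) \rangle.
\end{equation*}
Let $d$ be the Hilbert space dimension of $\cH'$, and fix an isometric identification $\cH' \cong \ell^2(d)$. The normalization $K(\lambda_0,\mu)\equiv 1$ gives $F(\lambda_0,\mu)\equiv 0$, and density of $\spa b(X)$ then forces $b(\lambda_0)=0$. For each $\lambda \in X$,
\begin{equation*}
  \|b(\lambda)\|^2 = F(\lambda,\lambda) = 1 - \frac{1}{K(\lambda,\lambda)} < 1,
\end{equation*}
where the strict inequality uses $K(\lambda,\lambda)>0$, valid since $K(\cdot,\lambda)$ is a nonzero element of $\cH$. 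Thus $b$ maps $X$ into $\bB_d$, and rearranging the definition of $F$ gives the desired kernel identity.

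Injectivity of $b$ is immediate from irreducibility: if $b(\lambda)=b(\mu)$, then
\begin{equation*}
  K(\cdot,\lambda) = \frac{1}{1 - \langle b(\cdot),b(\lambda)\rangle} = K(\cdot,\mu),
\end{equation*}
forcing $\lambda=\mu$. Finally, setting $Y = b(X)$, the bijection $b: X \to Y$ transports the reproducing kernel $k_d\res{Y\times Y}$ of $H^2_d\res{Y}$ to $K$, so that the pullback $H^2_d\res{Y}\to\cH$, $g \mapsto g \circ b$, is a unitary; composing with the coisometry \eqref{E:res_map} yields the stated unitary $H^2_d\ominus I(Y)\to\cH$. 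The substantive ingredient in this proof is the McCullough--Quiggin characterization, which is the main obstacle but is already fully developed in \cite{AM02}; everything else amounts to unpacking the factorization and using irreducibility.
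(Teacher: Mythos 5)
Your proposal is correct and follows essentially the same route as the paper: the paper simply cites the Agler--McCarthy embedding (Theorem 8.2 together with the McCullough--Quiggin characterization, Theorem 7.31, in \cite{AM02}) for the first assertion, and your argument just unpacks that citation via the standard Kolmogorov factorization of $1 - 1/K$, with the normalization, injectivity, and unitarity steps handled as in the paper. No gaps.
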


\begin{proof}
  See \cite{AM00}, Theorem 3.1 in \cite{BTV01}, or Theorem 8.2 and Theorem 7.31 in \cite{AM02}. To deduce the second part from
  the first one, note that the identity for the kernels implies that
  \begin{equation*}
    H^2_d \res{Y} \to \cH, \quad f \mapsto f \circ b,
  \end{equation*}
  is unitary. Therefore, the composition of this map with the unitary operator in \eqref{E:res_map}
  is unitary as well.
\end{proof}

In the above setting, let $\cF_Y = H^2_d \ominus I(Y)$. This space is co-invariant under multiplication operators.
Clearly, every $\varphi \in \Mult(H^2_d)$ restricts to a multiplier on $H^2_d \res{Y}$, and hence
gives rise to the multiplier $(\varphi \res{Y}) \circ b$ on $\cH$. If $U$ denotes the unitary operator
in Theorem \ref{T:embed_into_DA}, then
\begin{equation*}
  U^* M_{(\varphi |_Y \circ b)} U = P_{\cF_Y} M_\varphi \res{\cF_Y}.
\end{equation*}
Thus, if we assume that all multiplication operators on $\cH$ are hyponormal, then all operators appearing
on the right-hand side of the last identity are hyponormal as well. We will use this fact to show that $\cF_Y$
can be identified with $H^2$.

\section{Proof of the main result}
\label{S:proof}

The discussion at the end of the last section suggests studying compressions of multiplication
operators to co-invariant subspaces such that the compressed operator is hyponormal.
We need the following simple observation.

\begin{lem}
  \label{L:hyponormal_isometric}
  Let $\cH$ be a Hilbert space,
  let $T \in \cB(\cH)$ and let $M \subset \cH$ be a co-invariant subspace for $T$. Suppose
  that the compression of $T$ to $M$ is hyponormal. If $f \in M$ with $||T^* f|| = ||T f||$,
  then $T f \in M$.
\end{lem}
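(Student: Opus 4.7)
The plan is to exploit the fact that co-invariance of $M$ means $T^*|_M$ coincides with the adjoint of the compression $S := P_M T|_M \in \cB(M)$, and then read off the desired conclusion from a short chain of norm inequalities.

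First I would record the two elementary identities. Since $T^* M \subset M$, for any $g \in M$ and $h \in M$ we have $\langle Tg, h\rangle = \langle g, T^* h\rangle$, so $S^* = T^*|_M$. In particular, for the fixed $f \in M$,
\begin{equation*}
  \|S^* f\| = \|T^* f\|, \qquad \|S f\| = \|P_M T f\| \le \|T f\|,
\end{equation*}
with equality in the second relation precisely when $T f$ already lies in $M$.

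Next I would invoke hyponormality of $S$ to get $\|S^* f\| \le \|S f\|$. Combining this with the two identities above yields
\begin{equation*}
  \|T^* f\| = \|S^* f\| \le \|S f\| = \|P_M T f\| \le \|T f\|.
\end{equation*}
By hypothesis the extreme ends of this chain coincide, so every inequality is actually an equality. In particular $\|P_M T f\| = \|T f\|$, which forces $T f \in M$.

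I do not anticipate any real obstacle; the content is just the observation that the norm of a vector equals the norm of its projection onto a closed subspace only when the vector lies in that subspace. The only thing worth being careful about is making sure $S^* = T^*|_M$, which is exactly the definition of $M$ being co-invariant.
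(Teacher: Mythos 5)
Your proof is correct and follows essentially the same route as the paper: the chain $\|T^*f\| = \|S^*f\| \le \|Sf\| = \|P_M Tf\| \le \|Tf\| = \|T^*f\|$ forces $\|P_M Tf\| = \|Tf\|$ and hence $Tf \in M$. The only difference is that you spell out the identity $S^* = T^*|_M$ coming from co-invariance, which the paper leaves implicit.
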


\begin{proof}
  Since $M$ is co-invariant under $T$, and since $P_M T \big|_M$ is hyponormal, we have
  \begin{equation*}
    ||T^* f|| \le ||P_M T f|| \le ||T f|| = ||T^* f||.
  \end{equation*}
  Consequently, $||P_M T f|| = ||T f||$, and hence $T f \in M$.
\end{proof}

We will apply this observation to multiplication operators on $H^2_d$. Since the coordinate functions $z_i$
are multipliers on $H^2_d$, it follows from unitary invariance of the Drury-Arveson space that
all functions of the form $\kerpart{w}$ for $w \in \ell^2(d)$ are multipliers on $H^2_d$.

\begin{lem}
  \label{L:hyponormal_DA_kernel}
  Suppose that $\cF \subset H^2_d$ is a closed subspace which is co-invariant under multiplication operators. Let $z \in \bB_d$, and suppose that the compression
  $P_{\cF} M_{\kerpart{z}} \big|_{\cF}$ is hyponormal. Then the following assertions hold.
  \begin{enumerate}[label=\normalfont{(\alph*)}]
    \item 
      If $1 \in \cF$ and
      $K(\cdot, z) \in \cF$, then $\kerpart{z} \in \cF$.
    \item
    If $\kerpart{z} \in \cF$, then
    $\kerpart{z}^n \in \cF$ for all $n \ge 1$.
  \end{enumerate}
\end{lem}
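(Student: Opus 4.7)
The plan is to apply Lemma \ref{L:hyponormal_isometric} twice, once per part, each time with $T = M_{\kerpart{z}}$, whose compression to $\cF$ is hyponormal by hypothesis.

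For part (a), I propose the test vector $f = 1 - K(\cdot, z) \in \cF$. Three identities drive the norm computation: first, $M_{\kerpart{z}}^* 1 = 0$, since $\kerpart{z}$ has no constant term; second, $M_{\kerpart{z}}^* K(\cdot, z) = \|z\|^2 K(\cdot, z)$ by the standard eigenvalue relation for reproducing kernels; and third, $\kerpart{z} \cdot K(\cdot, z) = K(\cdot, z) - 1$, from the algebraic identity $t/(1-t) = 1/(1-t) - 1$. Combining these with $\|K(\cdot, z)\|^2 = 1/(1-\|z\|^2)$, $\|\kerpart{z}\|^2 = \|z\|^2$, and the reproducing identity $\langle \kerpart{z}, K(\cdot, z)\rangle = \|z\|^2$, a direct expansion should yield $\|M_{\kerpart{z}}^* f\|^2 = \|M_{\kerpart{z}} f\|^2 = \|z\|^4/(1-\|z\|^2)$. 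Lemma \ref{L:hyponormal_isometric} then places $M_{\kerpart{z}} f = \kerpart{z} + 1 - K(\cdot, z)$ in $\cF$, and subtracting the two known elements of $\cF$ leaves $\kerpart{z} \in \cF$. The case $z=0$ is trivial.

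For part (b), I induct on $n$; in the inductive step I take $f = \kerpart{z}^n$ in Lemma \ref{L:hyponormal_isometric}, so that success yields $M_{\kerpart{z}} f = \kerpart{z}^{n+1} \in \cF$. To verify $\|M_{\kerpart{z}} \kerpart{z}^n\| = \|M_{\kerpart{z}}^* \kerpart{z}^n\|$ in $H^2_d$, I will invoke unitary invariance of the Drury-Arveson space: the unitary on $H^2_d$ induced by a rotation of $\ell^2(d)$ sending $z/\|z\|$ to $e_1$ intertwines $M_{\kerpart{z}}$ with $\|z\| M_{z_1}$ and carries $\kerpart{z}^n$ to $\|z\|^n z_1^n$. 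The problem then collapses onto the invariant subspace $\overline{\bC[z_1]} \subset H^2_d$, on which $\{z_1^k\}_{k \ge 0}$ is orthonormal and $M_{z_1}$ acts as the unilateral shift, so both norms evaluate to $\|z\|^{n+1}$.

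The least obvious step should be the choice of test vector in part (a). Imposing the norm-equality hypothesis of Lemma \ref{L:hyponormal_isometric} on a general affine combination $\alpha \cdot 1 + \beta K(\cdot, z) \in \cF$ forces $\alpha + \beta = 0$, so $f = 1 - K(\cdot, z)$ is essentially dictated by the problem. With this choice in hand, part (b) should be a routine induction.
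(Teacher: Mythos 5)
Your proposal is correct and follows essentially the same route as the paper: both parts hinge on Lemma \ref{L:hyponormal_isometric} with the same test vectors (your $f=1-K(\cdot,z)$ is the negative of the paper's $K(\cdot,z)-1$, and your inductive step is the paper's with the index shifted). The only cosmetic differences are that you verify the norm equality in (a) by direct inner-product expansion rather than via the orthogonal series $\sum_{n\ge 1}\kerpart{z}^n$, and in (b) you reduce to the unilateral shift by unitary invariance instead of quoting the adjoint formula $M_{\kerpart{w}}^*\kerpart{w}^{n-1}=\kerpart{w}^{n-2}$; both computations check out.
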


\begin{proof}
  (a)
  Clearly, we may assume that $z \neq 0$, and define $w = z / ||z||$. Then
  \begin{equation*}
    \iota: H^2 \to H^2_d, \quad \sum_{n=0}^\infty a_n \zeta^n \mapsto \sum_{n=0}^\infty a_n \kerpart{w}^n,
  \end{equation*}
  is an isometry, where $\zeta$ denotes the identity function on $\bC$.
  Under this embedding,
  the unilateral shift $M_\zeta$
  on $H^2$ corresponds to the restriction of $M_{\kerpart{w}}$ to the reducing subspace
  $\iota(H^2)$. In particular, $M_{\kerpart{w}} \res{\iota(H^2)}$ is an isometry.

  Now, consider
  \begin{equation*}
    f = K(\cdot,z) - 1 = \sum_{n=1}^\infty \kerpart{z}^n \in \cF.
  \end{equation*}
  Observe that $f$ is contained in the range
  of the isometry $M_{\kerpart{w}} \res{\iota(H^2)}$, hence
  \begin{equation*}
    || M_{\kerpart{w}} f|| = ||f|| = ||M_{\kerpart{w}}^* f ||.
  \end{equation*}
  Lemma \ref{L:hyponormal_isometric} implies that $\cF$ contains the element
  $M_{\kerpart{z}} f$, and thus also
  \begin{equation*}
    f - M_{\kerpart{z}} f = \kerpart{z} \in \cF.
  \end{equation*}

  (b)
  The proof is by induction on $n$. The base case $n=1$ holds by assumption. Suppose that $n \ge 2$ and the assertion
  is true for $n-1$. The same argument as in the proof of part (a), applied to $\kerpart{z}^{n-1}$ in place of $f$, shows that
  \begin{equation*}
    ||M_{\kerpart{z}} \kerpart{z}^{n-1}|| = ||M_{\kerpart{z}}^* \kerpart{z}^{n-1}||,
  \end{equation*}
  so that
  \begin{equation*}
    \kerpart{z}^n = M_{\kerpart{z}} \kerpart{z}^{n-1} \in \cF
  \end{equation*}
  by Lemma \ref{L:hyponormal_isometric}.
\end{proof}

Given $Y \subset \bB_d$, it can happen that there is a larger set $Z \supset Y$ such that every function in $H^2_d \res{Y}$
extends uniquely to a function in $H^2_d \res{Z}$. To account for that, we define
\begin{equation*}
  \overline{Y} = \{z \in \bB_d: f(z) = 0 \text{ for all } f \in I(Y) \}.
\end{equation*}
Then $\overline{Y}$ is the largest set which contains $Y$ and satisfies this extension property. Moreover,
it is easy to see that
\begin{equation*}
  \overline{Y} = \{ z \in \bB_d: K(\cdot,z) \in H^2_d \ominus I(Y) \}.
\end{equation*}

\begin{lem}
  \label{L:F_ball}
  Let $Y \subset \bB_d$ be a set with $0 \in Y$, and set
  $\cF_Y = H^2_d \ominus I(Y)$. If the compression
  $P_{\cF_Y} M_{\kerpart{w}} \res{\cF_Y}$ is hyponormal for
  every $w \in \bB_d$, then $\overline Y$ is a complex ball, that is,
  \begin{equation*}
    \overline{Y} = M \cap \bB_d
  \end{equation*}
  for some closed subspace $M$ of $\ell^2(d)$.
\end{lem}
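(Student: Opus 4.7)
The plan is to set $M$ to be the closed linear span of $\overline{Y}$ inside $\ell^2(d)$ and prove that $\overline{Y} = M \cap \bB_d$. One inclusion is immediate from the definition of $M$. For the reverse inclusion, I would use the characterization of $\overline{Y}$ recalled just before the statement and reduce the task to showing $K(\cdot, w) \in \cF_Y$ for every $w \in M \cap \bB_d$.

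The first step is to observe that $0 \in Y$ forces $1 = K(\cdot, 0) \in \cF_Y$, since every $f \in I(Y)$ satisfies $f(0) = \langle f, 1\rangle = 0$. For each $z \in \overline{Y}$ the definition gives $K(\cdot, z) \in \cF_Y$, so Lemma \ref{L:hyponormal_DA_kernel}(a) produces $\kerpart{z} \in \cF_Y$. Next I would use the fact that the assignment $\ell^2(d) \to H^2_d$, $w \mapsto \kerpart{w}$, is conjugate-linear and isometric, because the coordinate functions form an orthonormal system in $H^2_d$. By continuity this map sends the closed linear span $M$ of $\overline{Y}$ into the closed linear span of $\{\kerpart{z} : z \in \overline{Y}\}$, which is contained in the closed subspace $\cF_Y$. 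Hence $\kerpart{w} \in \cF_Y$ for every $w \in M$.

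The last step is to take $w \in M \cap \bB_d$ and apply Lemma \ref{L:hyponormal_DA_kernel}(b) to get $\kerpart{w}^n \in \cF_Y$ for all $n \geq 1$. Since $||w|| < 1$ and the powers $\kerpart{w}^n$ are mutually orthogonal in $H^2_d$, the series
\begin{equation*}
  K(\cdot, w) = \sum_{n=0}^\infty \kerpart{w}^n
\end{equation*}
converges in $H^2_d$, and each of its terms lies in $\cF_Y$, so the sum lies there as well. This yields $w \in \overline{Y}$ and completes the argument.

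The only delicate step is the passage from $\overline{Y}$ to its closed linear span $M$: everything turns on the fact that $w \mapsto \kerpart{w}$ is an isometric embedding of $\ell^2(d)$ into $H^2_d$, a feature specific to the Drury-Arveson kernel. Once this is in place, the conclusion follows by simply stringing together the two preceding lemmas with the geometric series for $K(\cdot,w)$.
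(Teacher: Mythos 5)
Your proposal is correct and follows essentially the same route as the paper: take $M$ to be the closed linear span of $\overline{Y}$, use Lemma \ref{L:hyponormal_DA_kernel}(a) to get $\kerpart{z} \in \cF_Y$ for $z \in \overline{Y}$, pass to all of $M$ via the conjugate-linear isometry $w \mapsto \kerpart{w}$, and then recover $K(\cdot,w)$ for $w \in M \cap \bB_d$ from Lemma \ref{L:hyponormal_DA_kernel}(b) and the geometric series. The extra details you supply (why $1 \in \cF_Y$ and why the series converges) are accurate and only make the paper's argument more explicit.
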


\begin{proof}
  Let $M$ be the closed linear span of $\overline{Y}$.
  Observe that for all $w \in \overline{Y}$, we have $K(\cdot,w) \in \cF_Y$. Since $1 = K(\cdot,0) \in \cF_Y$,
  part (a) of Lemma \ref{L:hyponormal_DA_kernel} implies that
  $\kerpart{w} \in \cF_Y$
  for all $w \in \overline{Y}$. It follows that
  \begin{equation*}
    \kerpart{v} \in \cF_Y \quad \tfa v \in M,
  \end{equation*}
  as $v \mapsto \kerpart{v}$ is a conjugate linear isometry.
  Using part (b) of Lemma \ref{L:hyponormal_DA_kernel}, we deduce that
  \begin{equation*}
    K(\cdot,v) = \sum_{n=0}^\infty \kerpart{v}^n \in \cF_Y
  \end{equation*}
  for all $v \in M \cap \bB_d$. This argument shows that $\overline{Y} \supset M \cap \mathbb B_d$, and the reverse
  inclusion is trivial.
\end{proof}

We can now prove the main result.

\begin{proof}[Proof of Theorem \ref{T:main}]
  If $X$ is a singleton, there is nothing to prove. Otherwise, fix $\lambda_0 \in X$. Since $K$ is an irreducible
  kernel, it is nowhere zero, so we can consider the normalized kernel defined by
  \begin{equation*}
    \widetilde K(\lambda,\mu) = \frac{K(\lambda,\mu)}{\delta(\lambda) \overline{\delta(\mu)}},
  \end{equation*}
  where
  \begin{equation*}
    \delta(\lambda) = \frac{K(\lambda,\lambda_0)}{\sqrt{K(\lambda_0,\lambda_0)}}.
  \end{equation*}
  Then $\widetilde K(\lambda_0,\mu) = 1$ for all $\mu \in X$. Moreover, if $\widetilde \cH$ denotes the reproducing kernel
  Hilbert space with kernel $\widetilde K$, then
  \begin{equation*}
    \widetilde \cH \to \cH, \quad f \mapsto \delta f
  \end{equation*}
  is a unitary operator.
  It is easy to see that $\widetilde \cH$ also
  satisfies the hypotheses of Theorem \ref{T:main}, so we will work with $\widetilde \cH$ instead of $\cH$.

  We will show that $\widetilde \cH$ can be identified with $H^2_{d'}$ for a suitable
  cardinal $d'$. It will then follow that $d'$ is necessarily $1$.
  By Theorem \ref{T:embed_into_DA}, there is an injection $b: X \to \mathbb B_d$ for some cardinal $d$
  such that $0 = b(\lambda_0) \in b(X)$ and such that
  \begin{equation*}
    \widetilde K(\lambda,\mu) = k_d(b(\lambda),b(\mu))
  \end{equation*}
  holds for all $\lambda,\mu \in X$.
  Define $Y = b(X)$ and $\cF_Y = H^2_d \ominus I(Y)$, and
  note that $0 \in Y$.
  The discussion at the end of Section \ref{S:prelim} now shows that $\cF_Y$
  satisfies the hypotheses of Lemma \ref{L:F_ball}, hence
  \begin{equation*}
    \overline{Y} = M \cap \bB_d
  \end{equation*}
  for some closed subspace $M$. Let $d'$ be the dimension of the Hilbert space $M$. As $X$ is not a singleton, $d' \neq 0$.
  Clearly, $\cF_Y = \cF_{\overline{Y}}$, so that the restriction map from
  $\cF_Y$ into $H^2_d \res{\overline {Y}}$ is unitary.
  If $V$ is an isometry from $\ell^2(d')$ onto $M \subset \ell^2(d)$, we have
  \begin{equation*}
    k_d(V(z), V(w)) = k_{d'} (z,w) \quad \tfa z,w \in \bB_{d'}.
  \end{equation*}
  Therefore,
  \begin{equation*}
    \cF_Y \to H^2_{d'}, \quad f \mapsto f \circ V,
  \end{equation*}
  is a unitary operator as well. Combining this map with the unitary from Theorem \ref{T:embed_into_DA},
  we obtain a unitary
  \begin{equation*}
    H^2_{d'} \to \widetilde \cH, \quad f \mapsto f \circ j,
  \end{equation*}
  where $j = V^* \circ b$.

  By assumption, all multiplication operators on $\widetilde \cH$ are hyponormal,
  hence the same is true for $H^2_{d'}$. This is only possible if
  $d' = 1$ (see the discussion at the beginning of Section \ref{S:prelim}), so that the last operator is in fact
  a unitary from $H^2$ onto $\widetilde \cH$. Injectivity of this operator implies that $A = j(X)$ is a set
  of uniqueness for $H^2$. Combining the identities for the various kernels, we see that
  \begin{equation}
    \label{E:kernel_id}
    K(\lambda,\mu) = \delta(\lambda) \overline{\delta(\mu)} k(j(\lambda), j(\mu)) \quad \tfa \lambda,\mu \in X,
  \end{equation}
  as asserted.

  To prove the additional assertion, let $\lambda_0 \neq \mu \in X$. Then $j(\mu) \neq 0$, so rearranging
  equation \eqref{E:kernel_id}, we obtain for $j$ the formula
  \begin{equation*}
    j(\lambda) = \big(\overline{j(\mu)}\big)^{-1} \Big( 1 - \frac{\delta(\lambda) \overline{\delta(\mu)}}{K(\lambda,\mu)} \Big).  
  \end{equation*}
  Taking the definition of $\delta$ into account, it follows that $j$ is continuous (respectively holomorphic)
  whenever $K(\cdot,\mu)$ is.
\end{proof}

\begin{rem}
  \label{R:equivalent_to_DA}
  (a) Since $d' = 1$ in the last proof, the isometry $V$ is of the form
  $\lambda \mapsto \lambda w$ for some unit vector $w$ in the one-dimensional space $M$.
  It is easy to see that in this situation, the inverse of the unitary
  \begin{equation*}
    \cF_{Y} = \cF_{\overline Y} \to H^2, \quad f \mapsto f \circ V,
  \end{equation*}
  is given by
  \begin{equation*}
    H^2 \to \cF_{Y} \subset H^2_d, \quad \sum_{n=0}^\infty a_n \zeta^n \mapsto \sum_{n=0}^\infty
    a_n \kerpart{w}^n.
  \end{equation*}
  An isometric embedding of this type was used in the proof of Lemma \ref{L:hyponormal_DA_kernel}.
  
  (b)
  For the most part of the proof of Theorem \ref{T:main}, we only used hyponormality of operators
  of the form $P_{\cF_Y} M_{\kerpart{w}} \res{\cF_Y}$ for $w \in \bB_d$ (notation as above).
  If $\cH$ is an irreducible complete Nevanlinna-Pick space with kernel $K$, normalized at some point $\lambda_0$, then
  these operators correspond to multiplication operators on $\cH$ with
  multipliers of the form
  \begin{equation}
    \label{E:NP_coordinates}
    \varphi(\cdot) = \langle b(\cdot), w \rangle \quad(w \in \bB_d),
  \end{equation}
  where $b$ is the injection from Theorem \ref{T:embed_into_DA}.
  These multipliers play the role of coordinate functions for Nevanlinna-Pick spaces (see the discussion
  preceding Beurling's theorem for Nevanlinna-Pick spaces \cite[Theorem 8.67]{AM02}).
  
  The only argument which requires hyponormality of more general multiplication operators is the proof that $d'=1$.
  Thus, if we weaken the hypothesis of Theorem \ref{T:main} and only require hyponormality of multiplication operators
  corresponding to functions as in \eqref{E:NP_coordinates},
  then $\cH$ will be equivalent to $H^2_{d'}$ (in the sense
  of part (2) of Theorem \ref{T:main}) for some cardinal $d'$.
\end{rem}

\begin{acknowledgments}
  The author would like to thank his advisor, Ken Davidson, for his advice and support.
\end{acknowledgments}

\bibliographystyle{amsplain}
\bibliography{literature}
\end{document}